\newtheorem{theorem}{Theorem}[section]
\newtheorem{lemma}[theorem]{Lemma}
\newtheorem{conjecture}[theorem]{Conjecture}
\newtheorem{corollary}[theorem]{Corollary}
\theoremstyle{definition}
\newtheorem{remark}[theorem]{Remark}
\newtheorem*{acknowledgement}{Acknowledgement}
\theoremstyle{remark}
\numberwithin{equation}{section}
\title{Chen primes in arithmetic progressions}
\date{}
\author{Pawe\l  ~Lewulis}
\begin{document}

\maketitle


\begin{abstract} 
We find a lower bound for the number of Chen primes in the arithmetic progression $a \bmod q$, where $(a,q)=(a+2,q)=1$. Our estimate is uniform for $q \leq \log^M x$, where $M>0$ is fixed.
\end{abstract}
\[ \]
\section{Introduction}
\

The famous twin prime conjecture asserts that there exist infinitely many primes $p$ such that $p+2$ is also a prime. It is a common perception that our current methods are insufficient to prove this supposition, although in some way one is able to get very close to it. For example in \cite{A} and \cite{B}, J. Chen proved the following famous result.
\text{ \\ }

\begin{theorem}\label{Chen} There are infinitely many primes $p$ such that $p+2$ has at most two prime factors, each greater or equal than $p^{1/10}$.
\end{theorem}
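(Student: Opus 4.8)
The plan is to prove this by a weighted linear sieve, following the architecture that underlies all Chen-type theorems. Fix a large $x$ and set $z = x^{1/10}$. I would sieve the sequence $\mathcal{A} = \{p+2 : p \le x \text{ prime}\}$ by the primes below $z$, so that every surviving element has all its prime factors $\ge z$; since such a number below $x$ has at most nine prime factors, the task is to cut this down to two. To that end I introduce Chen's weight
$$w(p) = 1 - \frac{1}{2}\sum_{\substack{z \le q < x^{1/3} \\ q \mid p+2}} 1,$$
and consider $\Delta = \sum_{p \le x,\; P^-(p+2)\ge z} w(p)$, where $P^-$ denotes the least prime factor. A short combinatorial check shows that if $p+2$ survives the sieve and has at least three prime factors, then either $w(p) \le 0$, or $p+2 = q_1 q_2 q_3$ with $x^{1/10}\le q_1 < x^{1/3} \le q_2 \le q_3$ (the only surviving positive case, since a fourth factor would force the product above $x$); call the set of these exceptional $p$ by $B$. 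Since each term with at most two prime factors contributes weight at most $1$, this yields the fundamental inequality
$$\#\{p \le x : p+2 \text{ is a } P_2,\ P^-(p+2)\ge z\} \ \ge\ \Delta - \tfrac12\, \#B,$$
so it suffices to prove the right-hand side is $\gg x/(\log x)^2$.

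Next I would estimate $\Delta$. Writing it out, $\Delta = S(\mathcal{A}, z) - \tfrac12\sum_{z \le q < x^{1/3}} S(\mathcal{A}_q, z)$, where $\mathcal{A}_q$ is the subsequence divisible by $q$, each term being a sifted quantity. The crucial analytic input is the level of distribution of the primes: by the Bombieri--Vinogradov theorem, $\mathcal{A}$ is well distributed in arithmetic progressions to moduli $D = x^{1/2-\varepsilon}$ on average, which plays the role of a level of distribution $\theta = 1/2$. Feeding this into the Jurkat--Richert linear sieve gives a lower bound for $S(\mathcal{A},z)$ and an upper bound for each $S(\mathcal{A}_q,z)$ in terms of the sieve functions $f$ and $F$; after summing over $q$ (using Mertens' theorem for the prime sum) one obtains $\Delta \ge (c_1 - o(1))\,x/(\log x)^2$ for an explicit constant $c_1$ built from integrals of $f$ and $F$.

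The decisive and hardest step is the upper bound for $\#B$, the count of $p\le x$ with $p+2$ a product of exactly three primes of the stated sizes. Estimating this directly is hopeless, since it is itself an almost-prime counting problem of the same depth as the theorem. Here I would invoke Chen's switching principle: writing $p + 2 = q_1\,(q_2 q_3)$ and summing first over the small prime $q_1 \in [x^{1/10}, x^{1/3})$, I reinterpret the inner count as sieving the sequence $\{q_1 q_2 q_3 - 2\}$ --- now with $p$ itself playing the role of the quantity forced to be prime --- which exchanges the rôles of the multiplicative variable and the prime and thereby converts $\#B$ into a bilinear expression. The point is that the resulting double sum has a well-factorable structure, so the large sieve together with the Bombieri--Vinogradov input can again be applied to control the error terms up to level $1/2$, yielding $\#B \le (c_2 + o(1))\,x/(\log x)^2$. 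This switching manoeuvre, and the mean-value estimates that make it effective, is the genuine obstacle; everything else is standard linear-sieve bookkeeping.

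Finally I would combine the three estimates. The theorem reduces to the numerical assertion $c_1 - \tfrac12 c_2 > 0$, where $c_1, c_2$ are the sieve constants produced above. Verifying this requires exploiting the precise behaviour of the linear-sieve functions $F$ and $f$ (the solutions of the usual delay-differential equations) and the specific choices $z = x^{1/10}$ and threshold $x^{1/3}$, which are tuned precisely so that the positive main term survives. Once positivity is established we obtain $\#\{p \le x : p+2 \text{ is a } P_2\} \gg x/(\log x)^2 \to \infty$, which more than suffices to conclude that infinitely many such primes exist; and since each surviving prime factor is $\ge z = x^{1/10}\ge p^{1/10}$, the size condition in the statement holds automatically.
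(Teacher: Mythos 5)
Your proposal is Chen's weighted-sieve argument --- the Chen weight, the Jurkat--Richert linear sieve fed by Bombieri--Vinogradov at level $x^{1/2-\varepsilon}$, and the switching principle for the triple-product term $\#B$ --- which is exactly the architecture this paper uses (with $z=x^{1/8}$ rather than $x^{1/10}$, and uniformly in arithmetic progressions) to prove its main Theorem \ref{1.3}; for the statement in question itself the paper simply cites Chen. Your sketch matches the paper's proof piece by piece: the weight inequality is Lemma \ref{1.4}, your $S(\mathcal{A},z)$ and $S(\mathcal{A}_q,z)$ are the quantities $A_1$ and $A_{2,p}$ of sections 4--5, and your $\#B$ is $A_3$, handled in section 6 by the same switching manoeuvre (with the squarefull term $A_{4,p}$, which your ``short combinatorial check'' should also isolate, disposed of trivially).
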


In fact, Chen showed that the number of such primes in an interval $[1,x]$ is greater than $cx/\log^2 x$, where $c$ is some positive constant. In \cite{C}, B. Green and T. Tao showed that there are infinitely many 3-term arithmetic progressions among them. This result was extended by B. Zhou in \cite{D}, where it is proven that in the same circumstances one can find arithmetic progressions of any given length.

It is natural to ask whether Chen primes (or twin primes) are equally distributed among arithmetic progressions as primes do. For example, using Cram\'er's random model we are led to the following conjecture about twin primes (cf. section 2 for the notation).

\begin{conjecture}\label{1.1} Let $a,q$ be positive integers such that $(a(a+2),q)=1$. Then
\[ \# \{ \mbox{\textup{twin primes} $p$ : $p \equiv a ~(\bmod ~q)$}\} = (2 \Pi_2 + o_q(1)) \frac{ x}{\varphi_2 (q) \log^2 x}. \]
\end{conjecture}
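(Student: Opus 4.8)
This final statement is a \emph{conjecture}, and an unconditional proof is out of reach: specialising to $q=1$ already yields the quantitative twin prime conjecture of Hardy and Littlewood. The most one can honestly do is reproduce the probabilistic heuristic that \emph{leads} to the displayed formula, and then isolate the step that resists rigour; that is the plan I would follow.

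First I would reparametrise the progression by writing $n = qm + a$, so that ``$n$ prime'' and ``$n+2$ prime'' become ``$L_1(m)$ prime'' and ``$L_2(m)$ prime'' for the two linear forms $L_1(m) = qm + a$ and $L_2(m) = qm + a + 2$. As $n$ runs over $[1,x]$ the variable $m$ runs over roughly $x/q$ integers. In Cram\'er's model an integer of size about $x$ is ``prime'' with probability $1/\log x$, so the naive expected count is $(x/q)(\log x)^{-2}$. This must be corrected by a local factor at each prime $p$, equal to the ratio of the genuine density of admissible residues to the density the independence assumption predicts; the product of these corrections is the singular series
\[ \mathfrak{S}(L_1,L_2) = \prod_p \frac{1 - \omega(p)/p}{(1-1/p)^2}, \]
where $\omega(p)$ is the number of residues $m \bmod p$ with $p \mid L_1(m)L_2(m)$.

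The heart of the derivation is the evaluation of $\omega(p)$ in two cases. If $p \mid q$ then $L_1 \equiv a$ and $L_2 \equiv a+2 \pmod p$, and the hypothesis $(a(a+2),q)=1$ forces $p\nmid a$ and $p\nmid a+2$, hence $\omega(p)=0$ and the local factor is $(1-1/p)^{-2}$; this is precisely where the coprimality condition enters. If $p\nmid q$ and $p$ is odd, then $L_1$ and $L_2$ each vanish at one residue and these residues are distinct, so $\omega(p)=2$ and the factor is $(1-2/p)(1-1/p)^{-2}=1-(p-1)^{-2}$, the familiar twin prime local factor; for $p=2\nmid q$ one has $\omega(2)=1$ and the factor is $2$. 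Multiplying everything, together with the prefactor $1/q$, the primes $p\nmid q$ rebuild the twin prime singular series while the primes $p\mid q$ supply the reciprocal totient: using $1-(p-1)^{-2}=p(p-2)/(p-1)^2$ and the multiplicativity of $\varphi_2$ one checks that the whole expression collapses to $2\Pi_2\,x/(\varphi_2(q)\log^2 x)$, which is the claimed main term.

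The genuine obstacle is that none of this is a proof. The passage from the singular series heuristic to a true asymptotic is exactly the open Hardy--Littlewood conjecture, and the ``local corrections'' are features of the random model rather than theorems about primes. Rigorously one can at best replace the asymptotic by a lower bound of the correct order of magnitude through a weighted sieve, and even then only for almost-primes in place of genuine twin primes --- which is precisely why the paper's actual results concern Chen primes. I would also emphasise that the error term $o_q(1)$ carries a subscript $q$, so uniformity in $q$ is \emph{not} part of this conjecture; securing such uniformity (as the paper does for Chen primes in the range $q\le\log^M x$) would additionally require a level-of-distribution input of Bombieri--Vinogradov type for the twin correlation, which is itself unavailable.
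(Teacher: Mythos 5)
Your proposal is correct and matches the paper's treatment: the statement is labelled a conjecture precisely because no proof is known, and the paper, like you, derives it only heuristically from Cram\'er's random model (your local-factor computation $\omega(p)=0$ for $p\mid q$, $\omega(p)=2$ for odd $p\nmid q$, $\omega(2)=1$, collapsing to $2\Pi_2\,x/(\varphi_2(q)\log^2 x)$, is the computation the paper leaves implicit, and it is right --- note $\varphi_2(q)$ is exactly the number of admissible residues $a \bmod q$). Your closing remarks on the non-uniformity signalled by $o_q(1)$ and on why the paper retreats to Chen primes are also accurate.
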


The goal of this paper is to prove a uniform (in a Siegel$-$Walfisz manner) lower bound for an analogous count of Chen primes. We will accomplish this using techniques developed by Chen with slight modifications (specifically, we shall follow the discussion in \cite{BLOG}). The main result can be stated as follows:

\begin{theorem}\label{1.3} Let $M>0$ and let $a,q$ be positive integers such that $(a,q)= (a+2,q)=1$ and $q \leq \log^M x$. Then
\[ \displaystyle  \sum_{x/2 \leq n \leq x-2} \Lambda_{a,q}(n) \mathbf{1}_{{\mathcal P}_2}(n+2) \mathbf{1}_{(n+2,P(x^{1/8}))=1} \gg_M \frac{x}{\varphi_2 (q) \log x}. \]
\end{theorem}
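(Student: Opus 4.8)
The plan is to bound the target sum from below by a Chen-type weighted sieve expression and then to evaluate that expression with the linear sieve together with a Bombieri--Vinogradov estimate that is uniform in the small modulus $q$. Write $\mathcal{A}$ for the sequence $(n+2)$ weighted by $\Lambda_{a,q}(n)$ over $x/2 \le n \le x-2$; since we are sieving a single shifted prime, this is a sieve problem of dimension $\kappa = 1$. For squarefree odd $d$ with $(d,q)=1$ the sifted quantity $|\mathcal{A}_d| = \sum_{n \equiv a\, (q),\, n \equiv -2\, (d)} \Lambda_{a,q}(n)$ collapses, by the Chinese Remainder Theorem (the hypothesis $(a+2,q)=1$ guaranteeing compatibility of the two congruences), to a weighted count of primes in a single residue class modulo $qd$. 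By the prime number theorem in progressions this equals $\tfrac{\omega(d)}{d} X + r_d$, where $X \asymp x/\varphi(q)$ is the total $\Lambda_{a,q}$-mass, $\omega$ is the multiplicative density attached to the shift by $2$ (so $\omega(\ell) = \ell/(\ell-1) \to 1$, confirming $\kappa=1$), and $r_d$ is an error term. I set $z = x^{1/8}$ so that $S(\mathcal{A},z)$ counts exactly the surviving $n+2$ coprime to $P(x^{1/8})$.

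Next I would introduce Chen's weight. The elements counted by $S(\mathcal{A},z)$ have all prime factors $\ge x^{1/8}$, so they are $P_2$ unless they carry three or more such factors; Chen's device is the linear combination
\[ \Omega \;=\; \tfrac12\, S(\mathcal{A},z) \;-\; \tfrac12 \sum_{x^{1/8} \le p < x^{1/3}} S(\mathcal{A}_p, z) \;-\; \tfrac12\, \Omega_2, \]
where $\Omega_2$ counts the exceptional products $n+2 = p_1 p_2 p_3$ with $x^{1/8} \le p_1 < x^{1/3} \le p_2 \le p_3$. A short combinatorial check (using that a $P_3$ in the range $[x/2,x]$ has its least factor below $x^{1/3}$, and that any element with four or more factors $\ge x^{1/8}$ has at least two factors below $x^{1/3}$) shows the per-element weight is $\le \mathbf{1}_{\mathcal{P}_2}(n+2)$ on the set coprime to $P(x^{1/8})$. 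Since $\Lambda_{a,q} \ge 0$, the target sum dominates $\Omega$, and it suffices to prove $\Omega \gg_M x/(\varphi_2(q)\log x)$.

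The evaluation of $\Omega$ rests on three sieve estimates. For $S(\mathcal{A},z)$ I would apply the Jurkat--Richert (Rosser--Iwaniec) lower-bound linear sieve; for the middle sum and for $\Omega_2$ I would apply Chen's switching principle, fixing the large prime variable and sieving the complementary cofactor, which turns each into an ordinary dimension-one sieve whose level of distribution remains accessible. Each application reduces $\Omega$ to a main term built from $\omega$ plus a remainder of shape $\sum_{d \le D} |r_d|$ with $D = x^{1/2-\varepsilon}$. The main terms are evaluated by Mertens' theorem and the Buchstab function and assemble into $\mathfrak{S}\cdot \tfrac{X}{\log x}$ times an absolute constant, where the singular series $\mathfrak{S}$ supplies the factor $2\Pi_2\,\varphi(q)/\varphi_2(q)$; combined with $X \asymp x/\varphi(q)$ this has size exactly $x/(\varphi_2(q)\log x)$. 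As in Chen's original argument, the numerical heart is that for the sieve exponent $1/8$ the resulting combination of Buchstab integrals is strictly positive.

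The main obstacle is the uniform control of the remainder. I need
\[ \sum_{d \le x^{1/2-\varepsilon}} \bigl| r_d(x) \bigr| \;\ll_{A,M}\; \frac{x}{\varphi(q)\, \log^{A} x} \]
to hold uniformly for $q \le \log^M x$, so that the remainder is swallowed by the positive main term. Since $r_d$ measures the discrepancy of primes in the class modulo $qd$, this is a Bombieri--Vinogradov estimate with the fixed modulus $q$ factored out of the summation. The smallness of $q$ is what makes it work: as $q \le \log^M x$, the composite moduli $qd$ range only up to $x^{1/2-\varepsilon}\log^M x$, comfortably inside the Bombieri--Vinogradov range, while the Siegel--Walfisz ingredient furnishes the $1/\varphi(q)$ saving in the main term uniformly in $q$. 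Establishing this hybrid estimate, and then checking that the switching arguments do not inflate the effective level beyond $x^{1/2-\varepsilon}$, is where the real work lies; granting it, the three sieve evaluations and the positivity of the main term complete the proof.
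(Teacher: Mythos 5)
Your overall architecture---Chen's weighted sieve, the Jurkat--Richert theorem, and a Bombieri--Vinogradov estimate with the small modulus $q$ factored out so that the bound is uniform for $q \le \log^M x$---is the same as the paper's. But the weight you wrote down is wrong in a way that kills the proof. You define
\[ \Omega \;=\; \tfrac12\, S(\mathcal{A},z) \;-\; \tfrac12 \sum_{x^{1/8} \le p < x^{1/3}} S(\mathcal{A}_p, z) \;-\; \tfrac12\, \Omega_2, \]
with coefficient $\tfrac12$ on \emph{every} term. Your pointwise claim (the per-element weight is $\le \mathbf{1}_{\mathcal{P}_2}(n+2)$ on integers coprime to $P(x^{1/8})$) is in fact true for these coefficients, but it is too lossy: a prime value of $n+2$ now contributes only $\tfrac12$ instead of $1$, while the subtracted terms keep their full size. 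Chen's device needs leading coefficient $1$, i.e. $S(\mathcal{A},z) - \tfrac12\sum_p S(\mathcal{A}_p,z) - \tfrac12\Omega_2$ (this is the paper's Lemma \ref{1.4}, up to the negligible $p^2 \mid n+2$ correction). The distinction is fatal because the final positivity is razor-thin. The constants that the Jurkat--Richert theorem delivers at level of distribution $x^{1/2-o(1)}$ with $z=x^{1/8}$, $y=x^{1/3}$ are exactly those of the paper: in units of $\Pi_2\, x/(2\varphi_2(q)\log x)$ one gets a lower bound $4.394$ for $S(\mathcal{A},z)$ and upper bounds $7.168$ and $1.456$ for the two subtracted sums. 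With the correct coefficients the combination is $4.394 - \tfrac12(7.168+1.456) = 0.082 > 0$; with yours it is $\tfrac12(4.394 - 7.168 - 1.456) \approx -2.1 < 0$. So the sieve lower bound for your $\Omega$ is negative, and no care with remainder terms can rescue it: the ``strictly positive combination of Buchstab integrals'' you appeal to at the end simply does not hold for the weight as you defined it, and since $f(s)<F(s)$ are the extremal functions of the linear sieve, this cannot be fixed within the method---only by restoring the coefficient $1$.

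Two smaller points. The switching principle is needed only for $\Omega_2$, not for the middle sum $\sum_p S(\mathcal{A}_p,z)$, which is handled by a direct upper-bound sieve at level $D/p$ (as in the paper's Section 5). And once you switch in $\Omega_2$, the remainder terms are no longer discrepancies of $\Lambda$ in progressions but of the sequence of products $p_1p_2p_3$; you then need a Bombieri--Vinogradov theorem for that sequence (the paper's Lemma \ref{2.4}, quoted from Friedlander--Iwaniec, Theorem 22.3), which is not a corollary of the classical Bombieri--Vinogradov theorem. Your sketch gestures at ``not inflating the level'' but does not identify this ingredient.
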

After removing the small contribution of the prime powers, we can also convert this into a more elegant form.

\begin{corollary}\label{1.2}  Let $M>0$ and let $a,q$ be positive integers such that $(a,q)= (a+2,q)=1$ and $q \leq \log^M x$. Then
\[  \sum_{ \substack{ n \leqslant x \\ n \equiv a ~(q) }}   \mathbf{1}_{n \in \mathcal{P}_{Ch}} \gg_M \frac{x}{\varphi_2 (q) \log^2 x}. \]
\end{corollary}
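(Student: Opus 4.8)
The plan is to deduce Corollary \ref{1.2} directly from Theorem \ref{1.3} by three elementary reductions: checking that every prime $n$ surviving in the sum of Theorem \ref{1.3} is genuinely a Chen prime, discarding the prime-power contribution hidden in $\Lambda_{a,q}$, and comparing the von Mangoldt weight with the trivial bound $\log x$. Since all the indicators involved are nonnegative and the range $[x/2,\,x-2]$ is contained in $[1,x]$, it suffices to bound the restricted count from below; no dyadic decomposition of $[1,x]$ is required, as dropping the block $[1,x/2)$ only decreases a sum of nonnegative terms.

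First I would verify the inclusion. Suppose $n \in [x/2,\,x-2]$ is prime with $n \equiv a \pmod q$, $\mathbf 1_{\mathcal P_2}(n+2)=1$, and $(n+2,P(x^{1/8}))=1$. Then $n+2$ has at most two prime factors, each exceeding $x^{1/8}$. Because $n \le x$ we have $n^{1/10} \le x^{1/10} < x^{1/8}$, so every prime factor of $n+2$ is $\ge n^{1/10}$; hence $n$ is a Chen prime in the sense of Theorem \ref{Chen}, i.e.\ $n \in \mathcal P_{Ch}$. This is the only place where the precise exponents intervene, and the choice $1/8$ in Theorem \ref{1.3} is made exactly so that the threshold $x^{1/8}$ dominates $p^{1/10}$ throughout the block $[x/2,\,x]$.

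Next I would strip the prime powers. Writing $\Lambda_{a,q}(n)=\Lambda(n)\mathbf 1_{n\equiv a\,(q)}$, the contribution of the proper prime powers $n=p^k$ with $k\ge 2$ and $n\le x$ is $O\!\big(x^{1/2}\log x\big)$, since there are $O(x^{1/2})$ such $n$ and each carries weight $\le \log x$. On the surviving prime terms $n=p\in[x/2,\,x]$ one has $\Lambda(n)=\log p\le \log x$, so using the inclusion of the previous paragraph together with nonnegativity,
\[
\sum_{\substack{n\le x\\ n\equiv a\,(q)}}\mathbf 1_{n\in\mathcal P_{Ch}}
\;\ge\;
\frac{1}{\log x}\sum_{x/2\le n\le x-2}\Lambda_{a,q}(n)\,\mathbf 1_{\mathcal P_2}(n+2)\,\mathbf 1_{(n+2,P(x^{1/8}))=1}
\;-\;O\!\big(x^{1/2}\big).
\]
Here the first sum on the right is exactly the quantity estimated in Theorem \ref{1.3}, which gives it as $\gg_M x/(\varphi_2(q)\log x)$. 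Since $q\le\log^M x$ forces $\varphi_2(q)\le q^2\le\log^{2M}x$, the main term is $\gg_M x/\log^{2M+2}x$, which dwarfs the $O(x^{1/2})$ remainder. Dividing by $\log x$ and absorbing the error yields $\gg_M x/(\varphi_2(q)\log^2 x)$, as claimed.

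There is no serious obstacle here: the corollary is a clean repackaging of Theorem \ref{1.3}. The only points demanding care are the exponent comparison $x^{1/8}>x^{1/10}\ge n^{1/10}$ that certifies Chen-primality on the block $[x/2,\,x]$, and the verification that the prime-power remainder is negligible uniformly in $q$, which uses nothing beyond $q\le\log^M x$. All of the genuine analytic difficulty has already been discharged in the proof of Theorem \ref{1.3}.
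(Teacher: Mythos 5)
Your reduction is correct and is essentially the route the paper itself takes: the paper dispatches Corollary \ref{1.2} with the single remark that it follows from Theorem \ref{1.3} ``after removing the small contribution of the prime powers,'' which is exactly your second and third steps (prime powers contribute $O(x^{1/2}\log x)$ to the weighted sum, the surviving prime terms carry weight $\Lambda(n)\le\log x$, and $\varphi_2(q)\le q\le\log^M x$ makes the $O(x^{1/2})$ remainder negligible, uniformly in $q$).

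One slip you should repair in the inclusion step. You certify $n\in\mathcal{P}_{Ch}$ by checking that the prime factors of $n+2$ are $\ge n^{1/10}$ and invoking Theorem \ref{Chen}. But the set $\mathcal{P}_{Ch}$ in this paper is \emph{not} the set of Chen primes in the sense of Theorem \ref{Chen}: by the definition in the notation section, $n\in\mathcal{P}_{Ch}$ requires either that $n+2$ is prime or that $n+2=p_1p_2$ with $p_1,p_2\ge n^{1/8}$. So ``prime factors $\ge n^{1/10}$'' is, as written, insufficient for membership in $\mathcal{P}_{Ch}$, and the ``i.e.'' in your argument is a non sequitur. Fortunately the facts you already isolated give the stronger statement directly: $(n+2,P(x^{1/8}))=1$ forces every prime factor of $n+2$ to be $\ge x^{1/8}$, and since $n\le x-2<x$ we have $x^{1/8}>n^{1/8}$; combined with $n+2\in\mathcal{P}_2$ this is precisely the defining condition of $\mathcal{P}_{Ch}$ (including the case $n+2=p^2$, since the definition does not require $p_1\ne p_2$). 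With $1/10$ replaced by $1/8$ throughout, your proof is complete and matches the paper's intended argument.
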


The proof of Theorem \ref{1.3} is based on the following observation. 

\begin{lemma}\label{1.4} For $x^{2/3}<n \leqslant x$ we have
\[ \displaystyle   \mathbf{1}_{{\mathcal P}_2}(n) \geq 1 - \frac{1}{2} \sum_{p \leq x^{1/3}}  \mathbf{1}_{p|n} - \frac{1}{2} \sum_{p_1 \leq x^{1/3} < p_2 \leq p_3}  \mathbf{1}_{n=p_1p_2p_3} - \sum_{p \leq x^{1/3}} \mathbf{1}_{p^2|n}. \]
\end{lemma}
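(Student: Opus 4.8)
The plan is to reduce the inequality to a purely combinatorial statement about the prime factorization of $n$. Since $\mathbf{1}_{\mathcal{P}_2}(n) \ge 0$ and each of the three sums on the right is nonnegative, the claim is automatic whenever $n \in \mathcal{P}_2$: the right-hand side is then $1$ minus nonnegative quantities, hence at most $1 = \mathbf{1}_{\mathcal{P}_2}(n)$. Everything therefore reduces to the case $n \notin \mathcal{P}_2$, i.e. $\Omega(n) \ge 3$, where $\mathbf{1}_{\mathcal{P}_2}(n) = 0$ and I must show that
\[ \tfrac12 \sum_{p \le x^{1/3}} \mathbf{1}_{p|n} + \tfrac12 \sum_{p_1 \le x^{1/3} < p_2 \le p_3} \mathbf{1}_{n = p_1 p_2 p_3} + \sum_{p \le x^{1/3}} \mathbf{1}_{p^2 | n} \ge 1. \]

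First I would set $d = \#\{p \le x^{1/3} : p \mid n\}$, the number of distinct ``small'' prime divisors of $n$, so that the first sum equals $d$. The argument is then a short case analysis. If $d \ge 2$, the first sum alone contributes $\ge 1$. If some small prime $p$ satisfies $p^2 \mid n$, the last sum contributes $\ge 1$. In the remaining case $d \le 1$ with every small prime dividing $n$ exactly once, I would write $n = p\,m$ (with $p$ the unique small prime, or $n = m$ when $d = 0$), where every prime factor of $m$ exceeds $x^{1/3}$ and $\Omega(m) = \Omega(n) - d \ge 2$.

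The heart of the matter is to exploit the upper bound $n \le x$: since each prime factor of $m$ exceeds $x^{1/3}$, having $\Omega(m) \ge 3$ would force $m > (x^{1/3})^3 = x$, hence $n \ge m > x$, a contradiction. This excludes both $d = 0$ (which gives $\Omega(m) = \Omega(n) \ge 3$) and the subcase $\Omega(m) \ge 3$, leaving exactly $d = 1$ and $\Omega(m) = 2$, i.e. $n = p\,p_2 p_3$ with $p \le x^{1/3} < p_2 \le p_3$. This is precisely the configuration detected by the middle sum, so that sum equals $1$ while the first sum equals $d = 1$; the two half-weights then add up to exactly $1$, closing the argument.

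The step I expect to require the most care is the bookkeeping of multiplicities together with the placement of the boundary $x^{1/3}$: squares must be absorbed by the separate term $\sum_{p} \mathbf{1}_{p^2 \mid n}$ (which is why it carries full, rather than half, weight), and one must verify that the inequalities $p \le x^{1/3}$ and $p_2 > x^{1/3}$ line up with the ranges in the middle sum so that the $d=1$, $\Omega(m)=2$ case is genuinely counted there. The lower bound $x^{2/3} < n$ appears to play no role beyond fixing the range in which the estimate is later applied; the essential constraint throughout is the upper bound $n \le x$, which alone rules out three or more unsieved prime factors.
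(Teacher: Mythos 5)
Your proof is correct and is exactly the ``easy case analysis'' the paper invokes without spelling out: trivial when $n \in \mathcal{P}_2$, and for $\Omega(n) \geq 3$ your split on the number of distinct small prime divisors, the presence of a small square factor, and the forced configuration $n = p_1 p_2 p_3$ with $p_1 \leq x^{1/3} < p_2 \leq p_3$ exhausts all cases. Your closing observation is also accurate: only the upper bound $n \leq x$ is actually used, while the hypothesis $x^{2/3} < n$ merely reflects the range in which the lemma is later applied.
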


\begin{proof} This is an easy case analysis.
\end{proof}

By Lemma \ref{1.4} one can reduce our task into the following estimation.

\begin{equation}\label{esti}
 A_1 - \frac{1}{2} \sum_{x^{1/8} \leq p \leq x^{1/3}} A_{2,p} - \frac{1}{2} A_3 -
\sum_{x^{1/8} \leq p \leq x^{1/3}} A_{4,p} \gg \frac{x}{\varphi_2 (q) \log x}, 
\end{equation}
where

\begin{align}\label{sumki} A_1 &:= \sum_{x/2 \leq n \leq x-2} \Lambda_{a,q}(n) \mathbf{1}_{(n+2,P(x^{1/8}))=1}, \\
A_{2,p} &:= \sum_{\substack{ x/2 \leq n \leq x-2 \\ p|n+2}} \Lambda_{a,q}(n) \mathbf{1}_{(n+2,P(x^{1/8}))=1},  \\
A_3 &:= \sum_{x/2 \leq n \leq x-2} \Lambda_{a,q}(n) \sum_{x^{1/8} \leq p_1 \leq x^{1/3} < p_2 \leq p_3}  \mathbf{1}_{n+2=p_1p_2p_3}. \\
A_{4,p} &:= \sum_{\substack{ x/2 \leq n \leq x-2 \\ p^2|n+2}} \Lambda_{a,q}(n) \mathbf{1}_{(n+2,P(x^{1/8}))=1}, 
\end{align}
To this end, we prove the following estimates.
\begin{align} A_1 & \geq (4.394 -o(1)) \Pi_2 \frac{x}{2\varphi_2 (q) \log x}~~~~ \mbox{(section 4)}, \\
\sum_{x^{1/8} \leq p \leq x^{1/3}}  A_{2,p} &\leq  (7.168 + o(1)) \Pi_2 \frac{x}{2\varphi_2 (q) \log x}~~~~  \mbox{(section 5)}, \\
A_3 & \leq  (1.456 + o(1)) \Pi_2 \frac{x}{2 \varphi_2 (q) \log x}~~~~  \mbox{(section 6)}.
\end{align}
Notice that the right hand side sum in (\ref{esti}) can be handled easily. Indeed, we have
\begin{equation}
\sum_{x^{1/8} \leq p \leq x^{1/3}}  A_{4,p} \ll x \log x  \sum_{x^{1/8} \leq p \leq x^{1/3}} \frac{1}{p^2} \ll x^{7/8+o(1)}. 
\end{equation}

\begin{acknowledgement} This work is heavily influenced by the article \cite{BLOG} from T. Tao's blog for which I am very grateful. I would also like to thank M. Radziejewski, J. Kaczorowski and P. Achinger for valuable discussions and many corrections.
\end{acknowledgement}

\section{Notation}

\begin{itemize}
\item $p$ always denotes a prime number and $x$ will be a real number greater than 3;
\item $P(y):= \prod_{p<y}p$ for $y \geq 2$,
\item $\Lambda_{a,q} (n) := \mathbf{1}_{n \equiv a (q)} \Lambda(n)$,
\item $\mathcal{P}$, $\mathcal{P}_2$ are the sets of primes and almost primes respectively,
\item  $\mathcal{P}_{Ch} := \{ p : \exists_{p_1, p_2 \in \mathcal{P}}~p_1,p_2 \geqslant p^{1/8},~ p+2=p_1p_2 \}~ \cup ~ \{ \mbox{twin primes} \}, $
\item  ${\Pi_2 := \prod_{p>2} (1-\frac{1}{(p-1)^2})} \approx 0.66$,
\item $\varphi_2 (n) := n \prod_{p|n:p\not=2} \left( 1 - \frac{2}{p} \right) \times (1 - \mathbf{1}_{2|n}/2 )$,
\item $\Delta ( f; a~ (d) ) := \sum_{n \equiv a (d)} f(n) - \frac{1}{\phi(d)} \sum_{n: (n,d)=1 } f(n)$,
\item we will treat $M$ as an absolute constant, so in the further sections the dependence on $M$ will not be emphasized in any way.
\end{itemize}

\section{Preliminaries}

\begin{theorem}[Siegel$-$Walfisz]\label{SW}
For any $A > 0$ there exists a constant $C_A>0$ such that

\[ \sum_{ \substack{n \leq x \\ n=a\ (d)}} \Lambda(n) = \frac{x}{\phi(d)} + O( x \exp( - C_A \sqrt{\log x} ) ) \]
for every residue class $a\ (d)$ satisfying $(a,d)=1$ and for all $x \geq 2$ such that $d \leq \log^A x$.
\end{theorem}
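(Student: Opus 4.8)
The plan is to reduce the estimate to analytic information about Dirichlet $L$-functions via orthogonality of characters. Writing $\psi(x;d,a) := \sum_{n \le x,\, n \equiv a\,(d)} \Lambda(n)$ and using $\mathbf{1}_{n \equiv a\,(d)} = \frac{1}{\phi(d)} \sum_{\chi \bmod d} \bar{\chi}(a)\chi(n)$, which is valid since $(a,d)=1$, I would decompose
\[ \psi(x;d,a) = \frac{1}{\phi(d)} \sum_{\chi \bmod d} \bar{\chi}(a)\, \psi(x,\chi), \qquad \psi(x,\chi) := \sum_{n \le x} \chi(n)\Lambda(n). \]
The principal character contributes $\frac{1}{\phi(d)}\psi(x) + O(\log^2 x)$, where the error accounts for prime powers dividing $d$; since the prime number theorem gives $\psi(x) = x + O(x\exp(-c\sqrt{\log x}))$, this already produces the main term $x/\phi(d)$ within the claimed error, and passing between $\psi$ and $\Lambda$ is harmless. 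The whole problem thus reduces to bounding $\psi(x,\chi)$ for each of the $\phi(d)-1$ nonprincipal characters.

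For nonprincipal $\chi$ I would use the truncated explicit formula, which expresses $\psi(x,\chi)$ as $-\sum_{|\gamma| \le T} x^{\rho}/\rho$ plus an error of size $O(x\log^2(dx)/T + \log x)$, where $\rho = \beta + i\gamma$ runs over the nontrivial zeros of $L(s,\chi)$. The central input is the classical zero-free region: there is an absolute constant $c>0$ such that $L(s,\chi)$ has no zero with $\beta > 1 - c/\log(d(|t|+2))$, with at most one exception, a simple real zero $\tilde\beta$ belonging to a real character. Inserting this region, together with a standard bound on the number of zeros up to height $T$, controls the contribution of every non-exceptional zero; choosing $T$ so that $\log T \asymp \sqrt{\log x}$ and using $d \le \log^A x$ bounds this contribution by $x\exp(-c'\sqrt{\log x})$, uniformly in $d$.

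The hard part is the possible exceptional (Siegel) zero $\tilde\beta$, whose term $x^{\tilde\beta}/\tilde\beta$ is not adequately controlled by the zero-free region alone. Here I would invoke Siegel's theorem: for every $\varepsilon > 0$ there is a constant $C(\varepsilon)>0$ with $\tilde\beta < 1 - C(\varepsilon)\, d^{-\varepsilon}$. Taking $\varepsilon = 1/(2A)$ and using $d \le \log^A x$ gives $1 - \tilde\beta \gg (\log x)^{-1/2}$, so that $x^{\tilde\beta} \ll x\exp(-C(\varepsilon)(\log x)^{1/2})$, which is absorbed into the stated error. Since $C(\varepsilon)$ is ineffective, the resulting constant $C_A$ is ineffective as well, and this single step is the one genuinely deep ingredient of the argument. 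Summing the uniform bounds over the at most $\log^A x$ nonprincipal characters and combining with the main term from $\chi_0$ then completes the proof.
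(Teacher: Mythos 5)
The paper does not actually prove Theorem \ref{SW}: it is stated in the Preliminaries as a classical black box (the Siegel--Walfisz theorem) and simply used later, so there is no internal proof to compare yours against. Your proposal is the standard textbook argument (Davenport, \emph{Multiplicative Number Theory}, Chs.~19--22), and its outline is correct: the orthogonality decomposition $\psi(x;d,a)=\frac{1}{\phi(d)}\sum_{\chi}\bar\chi(a)\psi(x,\chi)$ is in fact an exact identity here, since $(a,d)=1$ makes the terms with $(n,d)>1$ vanish on both sides, and the $O(\log^2 x)$ correction enters only when comparing $\psi(x,\chi_0)$ with $\psi(x)$, as you say; the truncated explicit formula with $\log T\asymp\sqrt{\log x}$ together with the classical zero-free region handles all non-exceptional zeros uniformly for $d\le\log^A x$; and Siegel's theorem with $\varepsilon=1/(2A)$ gives $1-\tilde\beta\gg_A(\log x)^{-1/2}$, hence $x^{\tilde\beta}\ll x\exp(-C(\varepsilon)\sqrt{\log x})$, which is exactly where the hypothesis $d\le\log^A x$ is spent and where the ineffectivity of $C_A$ originates --- a point you correctly identify. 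One technical elision worth noting: the explicit formula, the zero-free region, and Siegel's theorem are statements about \emph{primitive} characters, so each $\chi\bmod d$ must first be replaced by the primitive character inducing it, at a cost of $O(\log^2(dx))$ per character from the Euler factors at primes dividing $d$; this is routine and does not affect the argument. Your final summation over the $\le\log^A x$ nonprincipal characters also works (the factor $\log^A x$ is absorbed into $\exp(-c''\sqrt{\log x})$), though it is slightly cleaner to keep the $1/\phi(d)$ weight and use only the uniform single-character bound.
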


\begin{theorem}[Bombieri$-$Vinogradov]\label{BV} Let $x \geq 2$. Then

\[  \sum_{d \leq D} \max_{a \in ({\bf Z}/d{\bf Z})^\times} 
\left| \Delta ( \Lambda \mathbf{1}_{[1,x]} ; a ~(d) ) \right|
 \ll_{A} x \log^{-A} x\]
for all $A > 0$, where $D \leq x^{1/2} \log^{-B} x$ for some sufficiently big $B = B(A)$.
\end{theorem}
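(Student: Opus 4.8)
The plan is to follow the classical route through the large sieve and a combinatorial identity for $\Lambda$. First I would pass from residue classes to Dirichlet characters. Writing $\psi(y,\chi):=\sum_{n\leq y}\Lambda(n)\chi(n)$, orthogonality gives, for $(a,d)=1$,
\[ \Delta(\Lambda\mathbf{1}_{[1,x]};a~(d))=\frac{1}{\phi(d)}\sum_{\chi\neq\chi_0}\bar{\chi}(a)\,\psi(x,\chi), \]
so that $\max_{a}\abs{\Delta(\Lambda\mathbf{1}_{[1,x]};a~(d))}\leq\frac{1}{\phi(d)}\sum_{\chi\neq\chi_0}\max_{y\leq x}\abs{\psi(y,\chi)}$. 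Next I would reduce to primitive characters: each non-principal $\chi\bmod d$ is induced by a primitive character $\chi^{*}$ of some conductor $r\mid d$, $r>1$, and replacing $\psi(y,\chi)$ by $\psi(y,\chi^{*})$ uniformly in $y$ costs only $O((\log x)(\log d))$ per character (the prime powers $p^{k}\leq x$ with $p\mid d$), summing to a negligible $O(D(\log x)^{3})$. Collecting the divisor condition $r\mid d$, the whole sum is then controlled by
\[ (\log x)\sum_{r\leq D}\frac{1}{\phi(r)}\sum_{\chi\bmod r}^{*}\max_{y\leq x}\abs{\psi(y,\chi)}, \]
where $\sum^{*}$ runs over primitive characters, and it suffices to prove this is $\ll x(\log x)^{-A-1}$.

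The second step is a dichotomy on the conductor $r$. For $r\leq(\log x)^{C}$ with $C=C(A)$ large, I would invoke Theorem \ref{SW}: its character form gives, uniformly for $y\leq x$, the bound $\abs{\psi(y,\chi)}\ll x\exp(-c\sqrt{\log x})$ for every primitive character of such small conductor, and as there are at most $(\log x)^{2C}$ of them this range saves an arbitrary power of $\log x$. The substantial range is $(\log x)^{C}<r\leq D$, which I would attack with Vaughan's identity using parameters $U=V=x^{1/3}$. This writes $\psi(y,\chi)$ as a bounded combination of \emph{Type I} sums $\sum_{k\leq x^{2/3}}\gamma_{k}\sum_{m\leq y/k}\chi(km)$ (with divisor-bounded coefficients and a possible smooth weight on the inner variable) and \emph{Type II} bilinear sums $\sum_{a}\sum_{b}\alpha_{a}\beta_{b}\chi(ab)$, in which, after the correct grouping of variables, both $a$ and $b$ lie in the medium range $(x^{1/3},x^{2/3}]$ with $ab\asymp x$.

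The heart of the matter, and the main obstacle, is the Type II sum. On a dyadic block $a\sim M$, $b\sim N$ with $MN\asymp x$ and $M,N\in(x^{1/3},x^{2/3}]$, I would separate the two variables --- removing the cross-condition $ab\leq y$ by a Perron/Fourier device at the cost of a few logarithms --- and apply the large sieve inequality, which for coefficients supported on $n\leq N_{0}$ reads $\sum_{r\leq R}\frac{r}{\phi(r)}\sum_{\chi\bmod r}^{*}\abs{\sum_{n\leq N_{0}}c_{n}\chi(n)}^{2}\ll(R^{2}+N_{0})\sum_{n}\abs{c_{n}}^{2}$, once to the $a$-sum and once to the $b$-sum. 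Accounting for the weight $1/\phi(r)\asymp 1/r$, this produces a bound whose leading term is $\asymp x^{1/2}D(\log x)^{O(1)}$, the remaining contributions being of size $x^{5/6}(\log x)^{O(1)}$ and smaller; the Type I sums are handled analogously (invoking the classical estimate for the genuinely long complete character sum where one factor is trivial) and are strictly easier. The decisive point is that $x^{1/2}D(\log x)^{O(1)}$ is $\ll x(\log x)^{-A-1}$ precisely when $D\leq x^{1/2}(\log x)^{-B}$ with $B=B(A)$ chosen large enough to absorb the accumulated logarithms from the identity, the dyadic splittings and the variable separation; at the endpoint $D=x^{1/2}$ the method yields only $x(\log x)^{O(1)}$, which is exactly the Bombieri$-$Vinogradov barrier and the reason one cannot cross $x^{1/2}$ by this argument. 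The delicate bookkeeping is ensuring that Vaughan's identity keeps both bilinear variables inside $(x^{1/3},x^{2/3}]$, away from the extremes where the large sieve is lossy --- which is what the balanced choice $U=V=x^{1/3}$ secures --- after which summing the three contributions gives the claim.
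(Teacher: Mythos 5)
The paper never proves this statement: Theorem \ref{BV} is quoted in the Preliminaries as a classical black box (exactly like Siegel--Walfisz and Jurkat--Richert), so there is no internal proof to compare yours against. Your sketch follows the standard textbook route (Vaughan's identity plus the multiplicative large sieve, as in Davenport or Iwaniec--Kowalski), and most of its architecture is right: the orthogonality step, the reduction to primitive characters, the divisor-collection producing the weight $(\log x)/\phi(r)$, the conductor dichotomy with Theorem \ref{SW} below $(\log x)^{C}$, the bilinear large sieve above it, and the correct identification of why $D\le x^{1/2}\log^{-B}x$ is needed.

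There is, however, a genuine gap in your treatment of the Type I sums, which you declare ``strictly easier'' and propose to handle by the classical long-character-sum estimate (P\'olya--Vinogradov). That step fails in the range that matters. Your Type I coefficients $\gamma_k$ run over $k\le x^{2/3}$, and P\'olya--Vinogradov bounds the inner sum by $O(r^{1/2}\log r)$ per character, so the Type I sum for a primitive character mod $r$ is only $O(x^{2/3}r^{1/2}(\log x)^{O(1)})$; summing over the $\asymp\phi(r)$ primitive characters for each $r\sim R$ against the weight $1/\phi(r)$ gives $\approx R^{3/2}x^{2/3}(\log x)^{O(1)}$, which at $R= x^{1/2}\log^{-B}x$ is $\approx x^{17/12}$, far larger than $x$. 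Pointwise character-sum bounds can never survive summation over $\asymp R^{2}$ characters with $R$ near $x^{1/2}$. The standard repair, absent from your sketch, is to split the Type I range: P\'olya--Vinogradov is used only for $k$ below a small threshold (a fixed power of $\log x$ suffices, and anything up to roughly $x^{1/4}$ works at the top of the modulus range), while for medium $k$ one forgets the smoothness of the long variable and feeds the sum into the bilinear large sieve exactly as a Type II sum --- harmless, since both variables are then long. This also corrects your bookkeeping claim: the choice $U=V=x^{1/3}$ confines the variables of the genuine Type II term to $(x^{1/3},x^{2/3}]$, but the Type I coefficient variable still runs from $1$ up to $x^{2/3}$. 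A smaller related slip: your list of ``remaining contributions'' ($x^{5/6}$ and below) omits the large-sieve diagonal, which after the $1/\phi(r)$ weight contributes $\approx x/R$ per dyadic block; it is precisely this term that forces the lower cutoff $(\log x)^{C}$ with $C=C(A)$, so your dichotomy absorbs it, but the sketch should say so rather than suggest the only enemy is $x^{1/2}D$.
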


Let us define the real functions $f(s)$ and $F(s)$ in the following way: 

\[  F(s) = \frac{2e^\gamma}{s}, ~~~~ f(s)= \frac{2e^\gamma}{s} \log(s-1). \]

\begin{theorem}[Jurkat$-$Richert]\label{LS}

  Consider $s>1$ and $z, D \geq 2$ which satisfy $z = D^{1/s}$. Let $\mathcal{Q}$ be a finite subset of $\mathcal{P}$ and let $Q$ be the product of the primes in $\mathcal{Q}$. Furthermore, let $h$ be a multiplicative function that for some $ \varepsilon$ with $0< \varepsilon < 1/200$ satisfies the inequality

\begin{equation}
\prod_{\substack{ p \in \mathcal{P}\diagdown \mathcal{Q} \\ u \leq p < z}} (1 - h(p))^{-1} < (1 + \varepsilon ) \frac{\log z}{\log u}  
\end{equation}
and
\[ \displaystyle  0 \leq h(p) < 1. \]
for every prime $p$. For a fixed integer $\alpha$, let $\left\{ E_d \right\}_{d=1}^\infty$ be a family of subsets of $\mathbf{Z}^+$ of the form $\{n: n \equiv \alpha \bmod d\}$. Let us consider a finitely supported non-negative real sequence $(a_n)_{n=1}^\infty$ and let $r_d$ be defined by the equality
\[ \displaystyle  \sum_{n \in E_d} a_n = h(d) X + r_d. \]
for every square-free $d \leq D$, some $X>0$ and some error terms $r_d$. Then, for any $1 < s \leq 3$, there is an upper bound

\begin{equation}
  \sum_{n\not \in\bigcup_{p <z} E_p} a_n \leq (F(s) +  \varepsilon e^{14-s} ) X V(z) +  \sum_{d \leq QD: \mu^2(d)=1} |r_d| ~~+ ~| \alpha |, 
  \end{equation}
and for any $2 \leq s \leq 4$ there is a lower bound

\begin{equation}
\sum_{n\not \in\bigcup_{p <z} E_p} a_n \geq (f(s) -  \varepsilon e^{14-s} ) X V(z) + \sum_{d \leq QD: \mu^2(d)=1} |r_d| ~~- ~ | \alpha |, 
\end{equation}
where $V(z)=\prod_{p<z}(1-h(p))$.

\end{theorem}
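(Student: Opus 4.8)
The plan is to read the statement as a one-dimensional (linear) sieve bound and to prove it through the Rosser--Iwaniec combinatorial sieve. Write $g(n) = \prod_{p<z:\, n\in E_p} p$ for the product of the sifting primes dividing out $n$, so that the sifted sum is $\sum_n a_n \mathbf 1_{g(n)=1}$. By Möbius inversion this equals $\sum_n a_n \sum_{d\mid g(n)}\mu(d)$, and since for squarefree $d\mid P(z)$ the Chinese Remainder Theorem identifies the condition $d\mid g(n)$ with $n\in E_d$, the first step is to bracket the indicator between Rosser's upper and lower weights,
\[ \sum_{d\mid m}\lambda_d^-\ \le\ \mathbf 1_{m=1}\ \le\ \sum_{d\mid m}\lambda_d^+\qquad(m\mid P(z)), \]
where $\lambda_d^{\pm}$ are the classical $\{-1,0,+1\}$-valued truncations of $\mu$ supported on squarefree $d\mid P(z)$ with $d\le D$. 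Applying this with $m=g(n)$ and summing against $a_n$ turns the sifted sum into $X\sum_{d\le D}\lambda_d^{\pm}h(d) + \sum_{d\le D}\lambda_d^{\pm}r_d$.

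The error term is the easy half: since $|\lambda_d^{\pm}|\le 1$ and the weights are carried on squarefree $d\le D$, one has $\bigl|\sum_d\lambda_d^{\pm}r_d\bigr|\le \sum_{d\le D:\,\mu^2(d)=1}|r_d|$. To accommodate the exceptional set $\mathcal Q$---on whose primes the density hypothesis is not assumed---I would refrain from sieving by the primes of $\mathcal Q$ and instead attach the full factor $Q$ to the support; this is what replaces $D$ by $QD$ in the stated error sum, while the bound $|\lambda_d^{\pm}|\le 1$ is preserved. The additive $\pm|\alpha|$ is a trivial boundary correction coming from the residue shift $\alpha$ in $E_d$, which I would absorb by a direct estimate.

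The crux is the main term $\sum_{d\le D}\lambda_d^{\pm}h(d)$, which must be shown to be $(F(s)+O(\varepsilon e^{14-s}))V(z)$ in the upper case and $(f(s)-O(\varepsilon e^{14-s}))V(z)$ in the lower case. Here I would run the Buchstab iteration on the Rosser weights: peeling off the largest prime factor expresses the weighted density sum at level $z$ in terms of the complementary sums at levels $<z$, producing an integral recursion whose continuous limit is exactly the pair of delay-differential equations $(sF(s))'=f(s-1)$ and $(sf(s))'=F(s-1)$ solved by the given $F,f$ (the explicit formulas $F(s)=2e^\gamma/s$ and $f(s)=(2e^\gamma/s)\log(s-1)$ being valid precisely in the quoted ranges $1<s\le 3$ and $2\le s\le 4$). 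The quasi-density hypothesis $\prod_{u\le p<z}(1-h(p))^{-1}<(1+\varepsilon)\log z/\log u$ is exactly the input needed to replace the arithmetic densities by the continuous kernel $\mathrm{d}t/t$ with multiplicative error $1+O(\varepsilon)$ at each stage; tracking how this error propagates through the bounded number of Buchstab steps up to $s\le 4$ is what produces the explicit loss $\varepsilon e^{14-s}$.

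I expect the main obstacle to be precisely this last analysis: controlling the accumulated error in the Buchstab recursion uniformly in the parameters and extracting a clean, explicit constant rather than an unspecified $O(\varepsilon)$. Once the main-term asymptotics and the error bound are in hand, assembling the upper and lower halves with the $\pm|\alpha|$ correction yields the two displayed inequalities. Since the paper follows \cite{BLOG}, I would align the normalization of $\lambda_d^{\pm}$ and of $V(z)$ with that reference so that the constant $e^{14-s}$ comes out exactly as stated.
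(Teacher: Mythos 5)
You could not have matched the paper's approach here, because the paper gives no proof of this statement at all: its entire argument is the one-line citation that this ``is a special case of the Jurkat--Richert theorem from \cite{NATH}'', with the verification of hypothesis (3.1) for the particular $h$ used later also delegated to that book. What you sketch instead --- Rosser weights $\lambda_d^{\pm}$ bracketing the sifted indicator, remainder bounded by $\sum_{d\leq D:\,\mu^2(d)=1}|r_d|$, and a Buchstab recursion driving the main term to the delay-differential system solved by $F$ and $f$ --- is the standard proof of the linear sieve, essentially the route carried out in \cite{NATH} (and, in beta-sieve form, in \cite{BLOG}). So the plan is the right one, but as a proof it has two genuine gaps.

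The first is your treatment of the exceptional set $\mathcal{Q}$, which fails as stated. ``Refraining from sieving by the primes of $\mathcal{Q}$'' means working with the reduced sift $\sum_{n\notin\bigcup_{p<z,\,p\notin\mathcal{Q}}E_p}a_n$. Since $a_n\geq 0$, the reduced sift overcounts the true sifted sum, so a lower bound for it gives no lower bound whatsoever for $\sum_{n\notin\bigcup_{p<z}E_p}a_n$; and even in the upper-bound direction its main term is $X\prod_{p<z,\,p\notin\mathcal{Q}}(1-h(p))$, which exceeds the claimed $XV(z)=X\prod_{p<z}(1-h(p))$ by precisely the factors $(1-h(p))^{-1}$ for $p\in\mathcal{Q}$, $p<z$, that the density hypothesis does not control. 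The correct mechanism --- and the true source of the level $QD$ --- is inclusion--exclusion over the $\mathcal{Q}$-primes: writing $Q(z)$ for the product of the primes of $\mathcal{Q}$ below $z$, one has $\sum_{n\notin\bigcup_{p<z}E_p}a_n=\sum_{t\mid Q(z)}\mu(t)\,S_t$, where $S_t$ is the $\mathcal{Q}$-free sift of the subsequence supported on $E_t$; each such subsequence has density function $h$ and mass $h(t)X$, one applies the upper or lower $\mathcal{Q}$-free sieve according to the sign of $\mu(t)$, the main terms reassemble into $XV(z)$, and the remainders involve moduli $td\leq QD$. The second gap is that the quantitative heart of the theorem --- showing that the Rosser/Buchstab iteration closes exactly at $F(s)=2e^{\gamma}/s$ on $1<s\leq 3$ and $f(s)=(2e^{\gamma}/s)\log(s-1)$ on $2\leq s\leq 4$, with cumulative loss at most $\varepsilon e^{14-s}$ under hypothesis (3.1) --- is exactly the part you defer as ``the main obstacle''. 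That analysis occupies the better part of a chapter in \cite{NATH}; without it, and with the $\pm|\alpha|$ boundary term likewise asserted rather than derived, what you have is an accurate map of the proof rather than the proof itself.
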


\begin{proof}
This is a special case of the Jurkat-Richert theorem from \cite{NATH}.
\end{proof}

\begin{remark}\label{Rem}
In \cite[\S10.3]{NATH} one can also find a proof that our particular choice of $h$ in the next section satisfies condition $(3.1)$ with $\mathcal{Q}$ being the set of first $y$ primes where $y$ depends on $\varepsilon$. In this situation we can also get $\varepsilon \rightarrow 0$ by letting $Q \rightarrow \infty$.
\end{remark}

\begin{lemma}\label{2.4} Let
\[  b(n) = \mathbf{1}_{x/2+2 \leq n \leq x} \sum_{x^{1/8} \leq p_1 \leq x^{1/3} < p_2 \leq p_3} \mathbf{1}_{n=p_1p_2p_3}.\]
Then

\[  \sum_{d \leq D} \max_{a \in ({\bf Z}/d{\bf Z})^\times} |R_{a,d}| :=  \sum_{d \leq D} \max_{a \in ({\bf Z}/d{\bf Z})^\times} 
\left| \sum_{n \equiv a \bmod d} b(n) - \frac{1}{\varphi (d)} \sum_{n } b(n) \right|
 \ll_{A} x \log^{-A} x\]
for any $A>0$ provided that $D \leq x^{1/2} \log^{-B} x$ for some sufficiently large $B = B(A)$. 
\end{lemma}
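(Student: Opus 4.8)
The plan is to recognise $b$ as a bilinear (Type~II) object and attack it with the large sieve, treating small and large moduli separately. Set $\alpha_\ell := \mathbf{1}_{\ell\in\mathcal{P}}\,\mathbf{1}_{x^{1/8}\le\ell\le x^{1/3}}$ and $\beta_m := \#\{(p_2,p_3):m=p_2p_3,\ x^{1/3}<p_2\le p_3\}$, so that
\[ b(n) = \mathbf{1}_{x/2+2\le n\le x}\sum_{\ell m=n}\alpha_\ell\beta_m . \]
Here $\alpha$ is supported on $\ell\asymp L$ with $x^{1/8}\le L\le x^{1/3}$ and $\beta$ on $m\asymp M:=x/L\in[x^{2/3},x^{7/8}]$, with $\beta_m\le 1$. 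The decisive feature is that both variables sit polynomially away from $1$ and from $x$; this is what makes the large sieve effective, whereas fixing two of the three primes and invoking Theorem~\ref{BV} on the remaining one fails, since that prime can run over a range shorter than the modulus. Before summing over characters I would split $\ell$ into dyadic blocks and detach the coupling cut-off $x/2+2\le\ell m\le x$ by a Perron integral or a smooth partition of unity, at the cost of factors $(\log x)^{O(1)}$; this separates the variables, so that $S_\alpha(\chi)=\sum_\ell\alpha_\ell\chi(\ell)$ and $S_\beta(\chi)=\sum_m\beta_m\chi(m)$ appear as a clean product against each Dirichlet character.

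For the small moduli $d\le(\log x)^{A_0}$ I would use Siegel--Walfisz. Since $b$ is assembled from primes, it inherits the Siegel--Walfisz property from Theorem~\ref{SW} (applied to the outer prime, or equivalently to $\beta$, whose equidistribution to such moduli is itself a consequence of Theorem~\ref{SW}); thus
\[ \sum_{n\equiv a\,(d)}b(n)=\frac{1}{\varphi(d)}\sum_{(n,d)=1}b(n)+O\!\left(x\exp(-c\sqrt{\log x})\right) \]
uniformly for $(a,d)=1$. As there are only $(\log x)^{A_0}$ such moduli, their total contribution is $\ll x\exp(-c'\sqrt{\log x})\ll_A x\log^{-A}x$.

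For $(\log x)^{A_0}<d\le D$ I would run the usual large-sieve argument. Opening the progression condition into Dirichlet characters and reducing each to its primitive inducing character $\chi\bmod r$ bounds the contribution by
\[ \ll(\log x)^{O(1)}\sum_{(\log x)^{A_0}<r\le D}\frac{1}{r}\sum_{\chi\bmod r}^{*}\bigl|S_\alpha(\chi)\bigr|\,\bigl|S_\beta(\chi)\bigr| . \]
Cauchy--Schwarz followed by the multiplicative large sieve inequality, applied to $S_\alpha$ and $S_\beta$ separately, gives, dyadically in $r\asymp R$,
\[ \ll(\log x)^{O(1)}\,\|\alpha\|_2\|\beta\|_2\,\Bigl(R+L^{1/2}+M^{1/2}+\tfrac{(LM)^{1/2}}{R}\Bigr), \]
and since $\|\alpha\|_2\|\beta\|_2\ll(LM)^{1/2}(\log x)^{O(1)}=x^{1/2}(\log x)^{O(1)}$ this is $\ll(\log x)^{O(1)}x^{1/2}\bigl(R+L^{1/2}+M^{1/2}+x^{1/2}/R\bigr)$. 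Each term is then acceptable: the bound $R\le D\le x^{1/2}\log^{-B}x$ controls $x^{1/2}R\le x\log^{-B}x$; the windows give $x^{1/2}M^{1/2}\le x^{15/16}$ and $x^{1/2}L^{1/2}\le x^{2/3}$; and $R>(\log x)^{A_0}$ makes the diagonal term $x^{1/2}\cdot x^{1/2}/R=x/R\le x\log^{-A_0}x$. Choosing $A_0$ and $B$ large enough in terms of $A$ yields $\ll_A x\log^{-A}x$.

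The main obstacle is the diagonal term $x/R$ produced by the smallest moduli in the large sieve: the bilinear estimate on its own cannot suppress it, which is exactly why the split at $(\log x)^{A_0}$ into a Siegel--Walfisz range and a large-sieve range is unavoidable. The secondary point needing care is the bookkeeping that detaches the cut-off $x/2+2\le n\le x$ while preserving the product structure; but provided the two size windows keep $L^{1/2},M^{1/2}\le x^{1/2-\delta}$, as they do here, the remaining terms stay comfortably below the target.
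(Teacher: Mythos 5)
Your argument is essentially correct, but it takes a genuinely different route from the paper. The paper's proof of Lemma \ref{2.4} is two lines: it cites Theorem 22.3 of \cite{FI} --- a ready-made Bombieri--Vinogradov theorem for products of primes --- and adds the single observation that terms with $(n,d)>1$ contribute $\ll x^{7/8+o(1)}$ in total, which converts the discrepancy relative to $\frac{1}{\varphi(d)}\sum_{(n,d)=1}b(n)$ (what the cited theorem controls) into the discrepancy relative to $\frac{1}{\varphi(d)}\sum_{n}b(n)$ appearing in the statement. What you wrote is, in effect, a proof of that black-boxed theorem in the case at hand: the factorization $b=\alpha*\beta$ with $\alpha$ supported on $[x^{1/8},x^{1/3}]$ and $\beta$ on $[x^{2/3},x^{7/8}]$ is exactly the Type~II structure required, and your chain --- dyadic decomposition, separation of the cut-off $x/2+2\le \ell m\le x$ by Perron at the cost of $(\log x)^{O(1)}$, Siegel--Walfisz for $d\le(\log x)^{A_0}$ (valid, since for fixed $p_2$ the prime $p_3$ runs over an interval of length a positive power of $x$, so Theorem \ref{SW} applies and the errors sum acceptably), then primitive characters, Cauchy--Schwarz and the multiplicative large sieve for larger moduli --- is the standard proof of such bilinear Bombieri--Vinogradov estimates; your exponent checks ($x^{1/2}R\le x\log^{-B}x$, $x^{1/2}M^{1/2}\le x^{15/16}$, $x^{1/2}L^{1/2}\le x^{2/3}$, $x/R\le x\log^{-A_0}x$) are all accurate. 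The trade-off is clear: the paper's proof is short but opaque about where the hypotheses $x^{1/8}\le p_1\le x^{1/3}<p_2\le p_3$ enter, while yours is self-contained and makes visible that the only structural inputs are that both convolution factors have length a positive power of $x$ and that one factor satisfies Siegel--Walfisz. One piece of bookkeeping you should add to match the statement exactly: opening the progression into characters produces the main term $\frac{1}{\varphi(d)}\sum_{(n,d)=1}b(n)$, not $\frac{1}{\varphi(d)}\sum_{n}b(n)$; the difference is $\frac{1}{\varphi(d)}\sum_{(n,d)>1}b(n)\ll x^{7/8}/\varphi(d)$ for each $d$ (any $n$ in the support of $b$ with $(n,d)>1$ is divisible by a prime $p\mid d$ with $p\ge x^{1/8}$, forcing also $d\ge x^{1/8}$), hence $\ll x^{7/8}\log x$ after summing over $d\le D$ --- precisely the remark with which the paper supplements its citation.
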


\begin{proof}
Follows from Theorem 22.3 from \cite{FI}. We also note that for each integer $d \geq x^{1/8}$ we have $\sum_{n:(n,d)>1} b(n) \ll x^{7/8}$, so the total contribution of ingredients of this form is at most $x^{7/8+o(1)}$ which is neglible. 
\end{proof}

\begin{lemma}\label{SUMY}
Let $d \geq 1$ be a fixed integer and let $f: (0,\infty)^d \rightarrow \bf{C}$ be a fixed compactly supported, Riemann integrable function. Then for $x>1$ we have
\end{lemma}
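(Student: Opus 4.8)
The plan is to read this as a multivariate \emph{Mertens} estimate: the left-hand sum weights each prime $p_i$ by $1/p_i$ and feeds the normalized logarithms $\log p_i/\log x$ into $f$, and the target is the integral $\int_{(0,\infty)^d} f(t_1,\dots,t_d)\,\frac{dt_1\cdots dt_d}{t_1\cdots t_d}$ (up to $o(1)$ as $x\to\infty$). The strategy is to factor the $d$-dimensional sum over a product of box indicators into a product of one-dimensional sums, each controlled by Mertens' theorem, and then to pass from step functions to a general Riemann integrable $f$ by a sandwiching argument.

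First I would reduce to a non-negative, real-valued $f$ by splitting $f$ into real and imaginary parts and each of those into positive and negative parts; the identity is linear in $f$, so nothing is lost. Since $f$ is compactly supported inside the open orthant $(0,\infty)^d$, its support lies in a box $[a,b]^d$ with $0<a<b<\infty$. In particular the weight $1/(t_1\cdots t_d)$ is bounded on the support, and every prime contributing to the sum lies in the window $[x^a,x^b]$, safely away from the small primes where the error term in Mertens is delicate.

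The heart of the matter is the product structure for box indicators. If $g=\prod_{i=1}^d \mathbf{1}_{[a_i,b_i]}$, the $d$-fold sum factors as
\[ \sum_{p_1,\ldots,p_d} \frac{g\bigl(\tfrac{\log p_1}{\log x},\ldots,\tfrac{\log p_d}{\log x}\bigr)}{p_1\cdots p_d} = \prod_{i=1}^d \Bigl( \sum_{x^{a_i}\leq p \leq x^{b_i}} \frac{1}{p} \Bigr), \]
and each one-dimensional factor is evaluated by Mertens' theorem $\sum_{p\leq y}1/p=\log\log y + M + o(1)$:
\[ \sum_{x^{a_i}\leq p \leq x^{b_i}} \frac{1}{p} = \log\log(x^{b_i}) - \log\log(x^{a_i}) + o(1) = \log\frac{b_i}{a_i} + o(1) = \int_{a_i}^{b_i}\frac{dt}{t} + o(1). \]
Multiplying out, the sum for $g$ converges to $\prod_i \log(b_i/a_i)=\int \frac{g(t)}{t_1\cdots t_d}\,dt$, and by linearity the same holds for any finite linear combination of box indicators, i.e.\ any step function.

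Finally I would upgrade to a general Riemann integrable $f\geq 0$. By Riemann integrability and compact support, for each $\eta>0$ there are step functions $g_-\leq f\leq g_+$ supported in $[a,b]^d$ with $\int (g_+-g_-)\,dt<\eta$; since $1/(t_1\cdots t_d)$ is bounded on the support, also $\int (g_+-g_-)\frac{dt}{t_1\cdots t_d}<C\eta$. Sandwiching the sum between those for $g_-$ and $g_+$, letting $x\to\infty$ first and then $\eta\to 0$, yields the claim. The main technical point to get right is this order of limits: the step-function case is valid for $x\to\infty$ precisely because the number of boxes is fixed (it depends on $\eta$ and $f$, not on $x$), and the squeeze closes only because the weight $1/(t_1\cdots t_d)$ is \emph{uniformly bounded} on the support. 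That uniform boundedness, guaranteed by compact support inside the open orthant, is exactly what prevents the unbounded weight near the origin and the accumulated Mertens error terms from spoiling the argument.
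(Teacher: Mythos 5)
Your proof is correct and is essentially the paper's own argument: the paper's one-line proof invokes precisely Mertens' second theorem combined with elementary properties of the Riemann integral, which is exactly what your factorization over boxes plus the step-function sandwich carries out in detail. The points you single out as delicate --- compact support inside the open orthant keeping the weight $1/(t_1 \cdots t_d)$ bounded and the primes confined to $[x^a, x^b]$, and letting $x \rightarrow \infty$ before $\eta \rightarrow 0$ --- are indeed the places where the argument needs care, and you handle them correctly.
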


\[ \sum_{p_1, \dots , p_d} \frac{1}{p_1 \dots p_d} f \left( \frac{ \log p_1}{\log x}, \dots , \frac{ \log p_d}{\log x} \right)  =
\int_{(0,\infty)^d} f(t_1, \dots , t_d ) \frac{ dt_1 \dots dt_d}{t_1 \dots t_d} + o_{x \rightarrow \infty} (1).\]
\text{ \\ }
\begin{proof} Follows from Mertens' second theorem combined with elementary properties of the Riemann integral.
\end{proof}
{ \ }

\section{Estimating $A_1$}
{ \ }

According to Theorem \ref{LS} let us define $E_d$ as the set of positive integers from the residue class $-2 ~(d)$. Let us also define a multiplicative function $g(d)$ such that $g(2)=0$ and $g(p)=1/(p-1)$. The precise value in non-square-free numbers is not relevant. Put $z=x^{1/8}$ and $D=x^{1/2-\epsilon_x}$, where $\epsilon_x:=(\log x)^{-1/2}$. We take $\mathcal{Q}$ to be the largest set of the form as in Remark \ref{Rem} satisfying $Q \leq \log \log x$. We have

\[ A_1 =  \sum_{\substack{ x/2 \leq n \leq x-2 \\ n \not\in \bigcup_{p<z} E_p}} \Lambda_{a,q}(n). \]

We define

\[  \sum_{\substack{ x/2 \leq n \leq x-2 \\ n \in E_d}} \Lambda_{a,q}(n) = 
\sum_{\substack{ x/2 \leq n \leq x-2 \\ n \equiv -2 ~(d) \\ n \equiv a ~(q)}} \Lambda (n) = g_{a,q}(d) \frac{x}{2} + r_d^{(a,q)},\]
where $g_{a,q}(d):= \mathbf{1}_{(q,d)=1}  g(d)/ \varphi (q)$ and $r_d^{(a,q)}$ is a remainder term. Here we notice that if $p|d,q$ for some  $p$, then $p|n+2$ which contradicts $n+2 \equiv a+2~ (q)$ joined with $(a+2,q)=1$. In such a case one has $ r_d^{(a,q)} = 0$.

According to Theorem \ref{LS}, we put $X=\frac{x}{2\varphi(q)}$ and $h(d)= g(d) \mathbf{1}_{(q, d)}$. If $(q,d)=1$, then  by the prime number theorem we get

\[  r_d^{(a,q)} = \Delta ( \Lambda \mathbf{1}_{[x/2,x-2]} ; c_{q,d} ~(qd) ) + O \left( \frac{x}{\varphi (qd) \exp( C \log^{1/10} x )} \right) .\]
for some residue class $c_{q,d} \in ({\bf Z}/ qd{\bf Z})^\times$ and some positive constant $C$. 

From the Bombieri$-$Vinogradov theorem one gets

\begin{equation}\label{32}
 \sum_{\substack{ d \leq QD \\ \mu^2 (d)=1}} |  \Delta ( \Lambda \mathbf{1}_{[x/2,x-2]} ; c_{q,d} ~(qd) )| \leq 
\sum_{d \leq qQD} \max_{c \in  ({\bf Z}/ d{\bf Z})^\times} | \Delta ( \Lambda \mathbf{1}_{[x/2,x-2]} ; c ~(d) )| \ll  x \log^{-M-10} x. \end{equation}
Notice that the estimate above is uniform because

\begin{equation}\label{33}
qQD \ll x^{1/2}x^{-1/(\log x)^{1/2}} \log^M x \log \log x = o\left( x^{1/2} \log^{-B(M+10)}x \right),
\end{equation}
where $B(M+10)$ is a real number large enough such that Theorem \ref{BV} can be used with exponent $M+10$.

By Theorem \ref{LS} one has

\begin{equation}\label{34}
A_1 \geq (f(4-8\epsilon_x) - O(\varepsilon)) \frac{x}{2 \varphi (q)} V(z) - O( x \log^{-M-10} x ). 
\end{equation}
From Mertens' third theorem we conclude

\begin{equation}\label{35}
V(z) = (1+o(1))\frac{2 \Pi_2}{e^\gamma \log z} \prod_{p \not=2 : p|q} \left( 1 - \frac{1}{p-1} \right)^{-1}.
\end{equation}

This can be simplified further to

\begin{equation}\label{36}
A_1 \geq (\log 3-o(1)) \Pi_2 \frac{x}{2\varphi_2 (q) \log z}.
\end{equation}

\section{Estimating $A_{2,p}$}
{ \ }

For $z \leq p \leq x^{1/3}$ we have

\[  A_{2,p} =  \sum_{\substack{ x/2 \leq n \leq x-2 \\ n \not\in \bigcup_{p'<z} E_{p'}}} \Lambda_{a,q}(n)\mathbf{1}_{p|n+2}. \]
We apply the Jurkat$-$Richert theorem, but this time taking $D/p$ instead of $D$. For any square-free $d$ we have

\begin{equation}\label{41}
\sum_{\substack{ x/2 \leq n \leq x-2 \\ n \in E_d}} \Lambda_{a,q}(n)\mathbf{1}_{p|n+2} =  
\sum_{\substack{ x/2 \leq n \leq x-2 \\ n \equiv -2 ~(pd) \\ n \equiv a ~(q) }} \Lambda(n) =
 g(d)\mathbf{1}_{(d,q)=1} \frac{g(p)x}{2\varphi(q)} + r_{pd}^{(a,q)},  
\end{equation}
and hence

\begin{equation}\label{42}
A_{2,p} \leq \left( F\left( \frac{\log D/p}{\log z} \right) + O(\varepsilon) \right) \frac{g(p)x}{2 \varphi (q)} V(z) + O\left( \sum_{d \leq QD/p} | \Delta ( \Lambda \mathbf{1}_{[x/2,x-2]} ; c_{q,pd} ~(pqd) ) | \right). 
\end{equation}
Since every $d \leq QD$ has at most $O(\log x)$ prime factors, one can conclude

\begin{equation}\label{43}
 \sum_{z \leq p \leq x^{1/3}} \sum_{d \leq QD/p} | \Delta ( \Lambda \mathbf{1}_{[x/2,x-2]} ; c_{q,pd} ~(pqd) ) | \ll \log x  \sum_{d \leq QD} |\Delta ( \Lambda \mathbf{1}_{[x/2,x-2]} ; c_{q,d} ~(qd) )| \ll x \log^{-M-9}x. 
\end{equation}

From (\ref{35}), (\ref{42}) and (\ref{43})

\begin{equation}\label{44}
\sum_{z \leq p \leq x^{1/3}}  A_{2,p} \leq
\Pi_2 \frac{x}{ e^{\gamma}\varphi_2 (q) \log z}  \sum_{z \leq p \leq x^{1/3}}  \frac{ \left( F\left( \frac{\log D/p}{\log z} \right) +O(\varepsilon ) \right)}{p} + O(x \log^{-M-9}x). 
\end{equation}
By Lemma 2.5 we have

\begin{equation}\label{45}
\sum_{z \leq p< x^{1/3}} \frac{F( \frac{\log D/p}{\log z} )}{p} = \int_1^{8/3} F( 4-8\epsilon_x -t ) \frac{dt}{t} + o(1) =
2e^{\gamma} \int_1^{8/3} \frac{1}{4-t-8\epsilon_x} \frac{dt}{t} + o(1) = 
\end{equation}
\[ (2e^{\gamma}+o(1))  \int_1^{8/3} \frac{dt}{(4-t)t} = \frac{e^\gamma \log 6}{2} + o(1).\]

Combining (\ref{44}) and (\ref{45}) we get

\begin{equation}\label{46}
 \sum_{z \leq p \leq x^{1/3}}  A_{2,p} \leq  (\log 6 + o(1)) \Pi_2 \frac{x}{2\varphi_2 (q) \log z}. 
 \end{equation}

\section{Estimating $A_3$}
{ \ }

We have

\begin{equation}\label{51}
A_3 =  \sum_{x/2 \leq n \leq x-2} \Lambda_{a,q}(n) \sum_{z \leq p_1 \leq x^{1/3} < p_2 \leq p_3} \mathbf{1}_{n+2=p_1p_2p_3} = 
\end{equation}
\[ \sum_{\substack{ x/2 + 2 \leq n \leq x \\ n \equiv a+2 ~(q)}} \Lambda(n-2) \sum_{x^{1/8} \leq p_1 \leq x^{1/3} < p_2 \leq p_3} \mathbf{1}_{n=p_1p_2p_3} \leq \log x \sum_{ \substack{n \in {\bf Z} \\ n \equiv a+2 ~(q)}} b(n) \mathbf{1}_{(n-2,P(\sqrt{x}))=1} + O(x^{0.51}),\]
where $b(n)$ is defined as in Lemma \ref{2.4}. The error term comes from the numbers of the form $p^k$ for $k \geq 2$. One can rewrite the above sum into

\[  \sum_{\substack{ n \in {\bf Z} \\ n \not\in \bigcup_{p<\sqrt{x}} E'_p}} b(n) \mathbf{1}_{n \equiv a+2 ~(q)}, \]
where $E'_d$ denotes the residue class $2~(d)$. We use the Jurkat$-$Richert theorem again. By Lemma \ref{2.4} for some $c'_{d,q} \in ({\bf Z}/dq{\bf Z})^\times$ we get (remember that conditions $n \equiv 2 ~(d)$ and $n \equiv a+2 ~(q)$ force $(d,q)=1$)

\begin{equation}\label{52}
  \sum_{n \in E'_d} b(n) \mathbf{1}_{n \equiv a+2 ~(q)} =    \sum_{\substack{ n \equiv 2 ~(d) \\ n \equiv a+2 ~(q)}} b(n) =
\frac{g(d)}{ \varphi(q)} \mathbf{1}_{(d,q)=1} \sum_{ n \in {\bf Z}} b(n) + R_{c'_{d,q},dq} =
\end{equation}
\[ g(d)  \mathbf{1}_{(d,q)=1} \sum_{ n \equiv a ~(q)} b(n) - g(d) \mathbf{1}_{(d,q)=1} R_{a,q} + R_{c'_{d,q},dq}, \]
where
\[ \sum_{\substack{ d \leq QD \\ \mu^2 (d)=1}} | R_{c'_{d,q},dq} -  g(d)\mathbf{1}_{(d,q)=1}R_{a,q} | \leq
  \sum_{\substack{ d \leq qQD \\ \mu^2 (d)=1}} \max_{c' \in ({\bf Z}/d{\bf Z})^\times } |R_{c',d}| ~+  \] 
  \[|R_{a,q}| \sum_{\substack{ d \leq QD \\ \mu^2 (d)=1}} \frac{1}{\varphi (d)} \ll
  x \log^{-M-10} x;\]
we used trivial inequality $|R_{a,q}| \leq \sum_{d \leq D} |R_{a,d}| \ll x \log^{-M-11}x$ above. Similarily to the previous situation of this kind, let us emphasize that the upper bound here is uniform with respect to $q$. By using the inequality from Theorem \ref{LS} with the level of distribution $QD^{1/(1+\epsilon_x)}$ we get

\begin{equation}\label{zapomnialem!}
\sum_{ \substack{n \in {\bf Z} \\ n \equiv a+2 ~(q)}} b(n) \mathbf{1}_{(n-2,P(\sqrt{x}))=1} \leq (F(1+\epsilon_x)+O(\varepsilon)) V( D^{1/(1+\varepsilon_x)} )  \sum_ {\substack{n \in {\bf Z} \\ n \equiv a+2 ~(q)}} b(n) + O( x \log^{-M-10} x ). 
\end{equation}
Again, by using the Mertens' third theorem one gets

\begin{equation}\label{53}
 V(D^{1/(1+\epsilon_x)} ) = \frac{1}{2} \Pi_2 (1+ o(1)) \frac{1}{e^\gamma \log z} \prod_{p\not= 2 : p|q} \left( 1 - \frac{1}{p-1} \right)^{-1}.
\end{equation}
By $F(s)   \xrightarrow{s \rightarrow 1^+} 2e^{\gamma}$ and

\[  \sum_ {\substack{n \in {\bf Z} \\ n \equiv a+2 ~(q)}} b(n) = \frac{1}{\varphi (q)} \sum_ {n \in {\bf Z} } b(n) + R_{a,q},\]
where  $|R_{a,q}| \ll x \log^{-M-11}x$, we have

\begin{equation}\label{54}
\sum_{ \substack{n \in {\bf Z} \\ n \equiv a+2 ~(q)}} b(n) \mathbf{1}_{(n-2,P(\sqrt{x}))=1} \leq (1 + o(1)) \Pi_2 \frac{1}{\varphi_2 (q) \log z}  \sum_ {n \in {\bf Z}} b(n) + O( x \log^{-M-10} x ). 
\end{equation}
By (\ref{51})$-$(\ref{54}) and Lemma \ref{SUMY} we get

\begin{equation}\label{55}
\sum_ {n \in {\bf Z}} b(n)  \leq (1+o(1)) \frac{x}{2\log x} \int_{1/8 \leq t_1 \leq 1/3 < t_2 < 1-t_1-t_2} \frac{dt_1 dt_2}{t_1 t_2 (1-t_1-t_2)} \leq  (0.364+o(1)) \frac{x}{2\log x}. 
\end{equation}

\bibliographystyle{amsplain}

\begin{thebibliography}{10}

\bibitem {A} J. Chen, \textit{On the representation of a large even integer as the sum of a prime
and the product of at most two primes}, Kexue Tongbao. 17 (1966).

\bibitem {B}  J. Chen, \textit{On the representation of a large even integer as the sum of a prime
and the product of at most two primes}, Sci. Sin. 16 (1973).

\bibitem {C} B. Green and T. Tao, \textit{Restriction theory of the Selberg sieve, with applications}, J. Th\'eor. Nombres
Bordeaux 18 (2006).

\bibitem {D} B. Zhou, \textit{The Chen primes contain arbitrarily long arithmetic progressions}, Acta Arith. 138 (2009).

\bibitem {FI} J. Friedlander and H. Iwaniec, \textit{Opera de cribro}. American Mathematical Society Colloquium
Publications, 57. American Mathematical Society, Providence, RI, 2010.

\bibitem {NATH} M. Nathanson,  \textit{Additive Number Theory: The Classical Bases}, Graduate Texts in Mathematics, 164. Springer-Verlag, 1996.

\bibitem {BLOG} T. Tao, \textit{254A, Supplement 5: The linear sieve and Chen's theorem (optional)}, blog post, available on 
\texttt{https://terrytao.wordpress.com/2015/01/29/} \ \texttt{254a-supplement-5-the-linear-sieve-and-chens-theorem-optional/}

\end{thebibliography}

\[ \]
\textsc{Institute of Mathematics, Polish Academy of Sciences, \'{S}niadeckich 8, 00-656 Warsaw, Poland \\ \\}
\textit{E-mail address:} \texttt{pkarasek{@}impan.pl}

\end{document}